\keywords{Sidon set; $B_h[g]$ set; meager set; combinatorial number theory 
}
\subjclass[2010]{
Primary: 11B99, 11B05, 11B30. Secondary: 54E52, 11B34, 11B75. 
}
\title{Is it true that most sets are Sidon?}
\author[P.~Leonetti]{Paolo Leonetti}
\address{Department of Economics, Universit\`a degli Studi dell'Insubria, via Monte Generoso 71, 21100 Varese, Italy}
\email{leonetti.paolo@gmail.com}
\urladdr{\url{https://sites.google.com/site/leonettipaolo/}} 
   \def\MR#1{}
\newtheorem{thm}{Theorem}[section]
\newtheorem{prop}[thm]{Proposition}
\theoremstyle{definition} 
\let\olddefi\defi
\renewcommand{\defi}{\olddefi\normalfont}
\let\oldquestion\question
\renewcommand{\question}{\oldquestion\normalfont}
\let\oldexample\example
\renewcommand{\example}{\oldexample\normalfont}
\let\oldrmk\rmk
\renewcommand{\rmk}{\oldrmk\normalfont}
\providecommand{\MR}[1]{}
\providecommand{\MR}{\relax\ifhmode\unskip\space\fi MR }
\begin{document}

\maketitle
\thispagestyle{empty}

\begin{abstract} 
No.
\end{abstract}


\section{Introduction} 

A subset $A$ of the nonnegative integers $\mathbb{N}$ is said to be \emph{Sidon set} if for every $x \in \mathbb{N}$ the equation 
$$x=a+b$$ has at most one solution $(a,b) \in A^2$ with $a\le b$. 
The aim of this note is to show that, from a topological point of view, most subsets of $\mathbb{N}$ are not Sidon sets. 


More generally, given an abelian semigroup $S$, an element $x \in S$, and an integer $h\ge 2$, we denote by $r_{A,h}(x)$ the number of solutions $(a_1,\ldots,a_h) \in A^h$ of the equation $x=a_1+\cdots+a_h$, 
where we consider two such solutions to be the same if they differ only in the ordering of the summands. 
It is worth to remark that, given an integer $g\ge 1$, sets $A\subseteq S$ which satisfy 
$$
\forall x \in S, \quad r_{A,h}(x) \le g
$$
are usually called $B_h[g]$\emph{ sets}; if $g=1$, they are simply called $B_h$\emph{ sets}. We denote by $\mathcal{B}_{h,g}$ the family of all $B_h[g]$ sets. 
Of course, in the case $S=\mathbb{N}$, a set $A\subseteq \mathbb{N}$ is Sidon if and only if it is a $B_2$ set (i.e., $r_{A,2}(n) \le 1$ for all $n \in \mathbb{N}$); we refer to \cite{MR2680183, Mel25} and references contained therein for related literature about $B_h[g]$ sets. 

In our first main result, we show, from a topological viewpoint, that most subsets of $\mathbb{N}$ are not $B_h[g]$. For, we identify $\mathcal{P}(\mathbb{N})$ with the Cantor space $\{0,1\}^{\mathbb{N}}$. In particular, we can speak about the topological complexity of subsets of $\mathcal{P}(\mathbb{N})$. 
\begin{thm}\label{thm:main}
Each 
$\mathcal{B}_{h,g}$ 
is closed and with empty interior \textup{(}hence, meager\textup{)}. 
\end{thm}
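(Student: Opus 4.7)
The plan is to verify closedness and empty interior separately; the meager conclusion then follows, since a closed set with empty interior is nowhere dense, hence meager. Recall that under the identification $\mathcal{P}(\mathbb{N}) \cong \{0,1\}^{\mathbb{N}}$, the basic open sets have the form $U_{F,\sigma} := \{B \subseteq \mathbb{N} : B \cap F = \sigma\}$ for some finite $F \subseteq \mathbb{N}$ and some $\sigma \subseteq F$.

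For closedness, I would show that the complement of $\mathcal{B}_{h,g}$ is open. If $A \notin \mathcal{B}_{h,g}$, there exist some $x \in \mathbb{N}$ and $g+1$ distinct unordered $h$-tuples in $A^h$ all summing to $x$. Let $N$ be the largest integer occurring in any of these tuples. Then every $B \subseteq \mathbb{N}$ with $B \cap \{0,1,\ldots,N\} = A \cap \{0,1,\ldots,N\}$ still contains the same $g+1$ tuples, so $r_{B,h}(x) \ge g+1$ and $B \notin \mathcal{B}_{h,g}$. Hence an entire basic neighborhood of $A$ lies in the complement.

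For empty interior, I would exhibit, for every basic open $U_{F,\sigma}$, a member $B \in U_{F,\sigma}$ not lying in $\mathcal{B}_{h,g}$. Pick any integer $M > \max F$ and let $B := \sigma \cup \{M, M+1, M+2, \ldots\}$, so that $B \cap F = \sigma$. For each nonnegative integer $k$ and each multiset $i_1 \le \cdots \le i_h$ of nonnegative integers summing to $k$, the elements $M+i_1,\ldots,M+i_h$ lie in $B$ and sum to $hM+k$; distinct multisets give distinct unordered representations. Therefore $r_{B,h}(hM+k)$ is at least the number of partitions of $k$ into at most $h$ nonnegative parts. For $h \ge 2$ this quantity is already at least $\lfloor k/2 \rfloor + 1$ (take all but two parts to be zero), so choosing $k$ with $\lfloor k/2 \rfloor + 1 > g$ forces $r_{B,h}(hM+k) > g$, and hence $B \notin \mathcal{B}_{h,g}$.

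Neither step presents a serious obstacle: closedness rests on the observation that failure of the $B_h[g]$ condition is witnessed by only finitely many elements of $A$, while empty interior reduces to the fact that any finite initial specification can be padded with an infinite arithmetic progression tail, whose $h$-fold sums automatically produce arbitrarily many coincidences.
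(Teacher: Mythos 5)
Your proposal is correct and follows essentially the same route as the paper: closedness via the observation that a violation $r_{A,h}(x)\ge g+1$ is witnessed entirely inside $A\cap[0,x]$ and hence persists on a basic neighborhood, and empty interior by appending a run of consecutive integers after the prescribed finite configuration and counting representations of a value near $h$ times the start of the run. The only cosmetic difference is that the paper uses a finite block $\{x_0,\ldots,h(x_0+g)\}$ with $g+1$ explicitly exhibited representations, whereas you use an infinite tail and a partition count; both yield the same conclusion.
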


In particular, considering the family of meager sets is a $\sigma$-ideal, it follows by Theorem \ref{thm:main} that the family of sets $A\subseteq \mathbb{N}$ which are $B_h[g]$ sets for some $h\ge 2$ and $g\ge 1$ is a meager subset of $\mathcal{P}(\mathbb{N})$ (of course, this is meaningful since the $\mathcal{P}(\mathbb{N})$ is a Polish space, hence it is not meager in itself). 

We are going to strengthen the meagerness claim given in Theorem \ref{thm:main} in two directions. To this aim, recall that 
\begin{equation}\label{eq:erdos}
r_{\{0,1,\ldots,n\},\,h}(n) \sim \frac{n^{h-1}}{h!(h-1)!} \quad \text{ as }n\to \infty
\end{equation}
for all integers $h\ge 2$; see \cite[Section 4]{MR4841}, and cf. also \cite[Theorem 4.2.1]{MR2260521} and \cite{MR1705753}.
First, we show that the behavior of $r_{A,h}$ is really wild for most sets $A$. Taking into account \eqref{eq:erdos}, the next result is optimal: 
\begin{thm}\label{thm:mainA}
    Fix an integer $h\ge 2$ and pick a divergent map $f: \mathbb{N}\to \mathbb{N}$ such that $f(n)=o(n^{h-1})$ as $n\to \infty$. 
    Then the family of all sets $A\subseteq \mathbb{N}$ such that 
    $$
    \liminf_{n\to \infty}r_{A,h}(n)=0 \quad \text{ and }\quad \limsup_{n\to \infty}\frac{r_{A,h}(n)}{f(n)}=\infty
    $$
    is 
    comeager. 
\end{thm}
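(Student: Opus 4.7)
The plan is to exhibit the given family as the intersection of two dense $G_\delta$ subsets of $\mathcal{P}(\mathbb{N})$ and then invoke the Baire category theorem. Let $\mathscr{L}$ and $\mathscr{U}$ denote the families of $A\subseteq \mathbb{N}$ with $\liminf_n r_{A,h}(n)=0$ and $\limsup_n r_{A,h}(n)/f(n)=\infty$, respectively. Since $r_{A,h}(n)$ is integer-valued and depends only on $A\cap \{0,1,\ldots,n\}$, one has
\begin{align*}
\mathscr{L} &= \bigcap_{N\ge 1}\bigcup_{n\ge N}\{A\subseteq \mathbb{N}: r_{A,h}(n)=0\},\\
\mathscr{U} &= \bigcap_{M,N\ge 1}\bigcup_{n\ge N}\{A\subseteq \mathbb{N}: r_{A,h}(n)>Mf(n)\},
\end{align*}
where every set in braces is clopen. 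So both $\mathscr{L}$ and $\mathscr{U}$ are $G_\delta$, and it remains to show that each of the unions above is dense in $\mathcal{P}(\mathbb{N})$.

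Fix a basic clopen cylinder $[s]$ determined by $s\in \{0,1\}^k$. To intersect the first union, I would extend $s$ by zeros to length $n+1$ for some integer $n\ge \max\{N,\,h(k-1)+1\}$: every $A$ in the resulting cylinder has $A\cap \{0,\ldots,n\}\subseteq \{0,\ldots,k-1\}$, so any representation $n=a_1+\cdots+a_h$ with $a_i\in A$ would force $a_i\in \{0,\ldots,k-1\}$ (using $0\le a_i\le n$), yielding $\sum_i a_i\le h(k-1)<n$, a contradiction; hence $r_{A,h}(n)=0$. To intersect the second union, I would instead extend $s$ by ones to length $n+1$, so that $\{k,k+1,\ldots,n\}\subseteq A$: substituting $b_i=a_i-k$,
\[
r_{A,h}(n)\ge r_{\{k,\ldots,n\},h}(n) = r_{\{0,1,\ldots,n-hk\},h}(n-hk)\sim \frac{n^{h-1}}{h!(h-1)!}
\]
by \eqref{eq:erdos}, and since $f(n)=o(n^{h-1})$, picking $n\ge N$ large enough ensures $r_{A,h}(n)>Mf(n)$.

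By the Baire category theorem, $\mathscr{L}\cap \mathscr{U}$ is a dense $G_\delta$, hence comeager, proving the theorem. The delicate step is the second density claim: the assumption $f(n)=o(n^{h-1})$ matches exactly the growth window permitted by \eqref{eq:erdos}, and without it $\mathscr{U}$ would already be empty, since $r_{A,h}(n)\le r_{\{0,1,\ldots,n\},h}(n)=O(n^{h-1})$ for every $A\subseteq \mathbb{N}$.
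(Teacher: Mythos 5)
Your proof is correct, but it follows a genuinely different route from the paper. The paper runs the Banach--Mazur game: Player II's strategy alternates between inserting a long gap $(k_m,x_m)$ with $x_m=h(1+k_m)+1$ (which kills all representations of $h(1+k_m)$, forcing the liminf to vanish along that sequence) and appending a long block $\{x_m,\ldots,t_m\}$ with $r_{\{x_m,\ldots,t_m\},h}(t_m)\ge mf(t_m)$; comeagerness then follows from the game characterization \cite[Theorem 8.33]{K}. You instead write the target set explicitly as $\mathscr{L}\cap\mathscr{U}$, observe that each is a countable intersection of open sets because $r_{A,h}(n)$ is integer-valued and determined by $A\cap[0,n]$, and verify density of each open layer by the same two combinatorial moves (a block of zeros long enough that $h(k-1)<n$, a block of ones long enough that \eqref{eq:erdos} beats $Mf(n)$). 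The combinatorial core is thus identical, but the frameworks differ, and yours buys something extra: it exhibits the set of Theorem \ref{thm:mainA} as a dense $G_\delta$, thereby pinning down its topological complexity --- precisely the information the paper remarks its own proof does not provide. The only blemish is your closing aside: if $f$ merely fails to be $o(n^{h-1})$, the set $\mathscr{U}$ need not be empty (take $f$ oscillating between, say, $\lceil\log n\rceil$ and $n^{h-1}$); emptiness would require $\liminf_n f(n)/n^{h-1}>0$. This is a side remark about optimality and does not affect the validity of the proof.
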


Second, we improve the meagerness claim in Theorem \ref{thm:main} in a different sense: informally, most subsets of $\mathbb{N}$ and all their \textquotedblleft small pertubations\textquotedblright\, are not $B_h[g]$ sets. 
This goes in the analogue direction of \cite{MR4607615}. 
For, we denote by $\mathsf{d}_\star(A)$ the lower asymptotic density of a subset $A\subseteq \mathbb{N}$, that is, 
$$
\mathsf{d}_\star(A):=\liminf_{n\to \infty}\frac{|A\cap [0,n)|}{n}, 
$$
see e.g. \cite{MR4054777}. 
\begin{thm}\label{thm:mainB}
The family of subsets $A\subseteq \mathbb{N}$ which admit, for some integers  $h\ge 2$ and $g\ge 1$, a $B_h[g]$ set $B\subseteq \mathbb{N}$ such that $\mathsf{d}_\star(A\bigtriangleup B)<1$  
is meager.  
\end{thm}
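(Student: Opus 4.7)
The plan rests on one classical observation: every $B_h[g]$ set has natural density zero. Indeed, for $B \in \mathcal{B}_{h, g}$, the $\binom{|B \cap [0, n)| + h - 1}{h}$ multisets of size $h$ drawn from $B \cap [0, n)$ all sum to elements of $[0, hn]$, each hit at most $g$ times, so $|B \cap [0, n)| = O((gn)^{1/h}) = o(n)$. This would be my first step.

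Next, I would use this to reduce the existential quantifier over $B$ to a condition on $A$ alone. The elementary identity
\[
|(A \triangle B) \cap [0, n)| = |A \cap [0, n)| + |B \cap [0, n)| - 2\,|(A \cap B) \cap [0, n)|,
\]
combined with $|B \cap [0, n)| = o(n)$, yields $\overline{\mathsf{d}}(A \triangle B) = \overline{\mathsf{d}}(A)$ for every $B$ lying in some $\mathcal{B}_{h, g}$. Since $\emptyset$ belongs to every $\mathcal{B}_{h, g}$, the family under consideration collapses to $\mathcal{F} := \{A \subseteq \mathbb{N} : \overline{\mathsf{d}}(A) < 1\}$.

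It then remains to verify that $\mathcal{F}$ is meager in $\mathcal{P}(\mathbb{N})$. I would write $\mathcal{F} = \bigcup_{k, N \ge 1} F_{k, N}$, where
\[
F_{k, N} := \{A \subseteq \mathbb{N} : |A \cap [0, n)| \le (1 - 1/k)\,n \text{ for every } n \ge N\}.
\]
Each $F_{k, N}$ is closed as an intersection of clopen conditions on finitely many coordinates, and it is nowhere dense: given any basic open set specified by a finite pattern on $[0, M)$, I would pick $n > \max(N, kM)$ and extend the pattern by forcing all of $[M, n)$ to lie in $A$, obtaining $|A \cap [0, n)| \ge n - M > (1 - 1/k)\,n$. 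A countable union of closed nowhere-dense sets is meager.

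The main obstacle is the first step — the density-zero bound for $B_h[g]$ sets — because this is what permits eliminating the existential quantifier over $B$; everything downstream is a routine meagerness argument in the Cantor space.
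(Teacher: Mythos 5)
Your argument is correct for the statement the paper is actually proving, and it takes a genuinely different route. The paper runs a Banach--Mazur game: Player II forces $A$ to contain long intervals $(k_m,y_m]$ with $y_m>mk_m$, and then a counting argument ($\binom{\lceil cy_m\rceil}{h}\le hgy_m$) shows that any $B$ sufficiently close to $A$ in density has too many elements in $[0,y_m]$ to be a $B_h[g]$ set. Your route isolates the content of that counting step as a standalone lemma --- every $B_h[g]$ set satisfies $|B\cap[0,n)|=O((gn)^{1/h})=o(n)$ --- and uses it to eliminate the existential quantifier over $B$ altogether, reducing the theorem to the meagerness of a density condition on $A$ alone, which you then verify via an explicit countable cover by closed nowhere dense sets $F_{k,N}$. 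This is more economical (no game needed), and it has a concrete payoff: it exhibits the family in question as (contained in) an $F_\sigma$ set, whereas the paper explicitly remarks that its proof of Theorem \ref{thm:mainB} does not provide the topological complexity of the claimed set.

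One point needs care. The paper defines $\mathsf{d}_\star$ as the \emph{lower} asymptotic density, but your collapse step produces the \emph{upper}-density condition $\overline{\mathsf{d}}(A)<1$; these do not match. Applied to the statement as literally printed, your own reduction collapses the family to $\{A:\mathsf{d}_\star(A)<1\}$, and that family is \emph{comeager}, not meager: a generic $A\subseteq\mathbb{N}$ omits arbitrarily long intervals, so $\mathsf{d}_\star(A)=0$, and $B=\emptyset$ is a $B_h[g]$ set witnessing $\mathsf{d}_\star(A\bigtriangleup B)<1$. So the theorem is false with the lower density and true (by your argument) with the upper density. This is a defect of the paper's statement rather than of your proof: the paper's own argument makes the same tacit substitution, deducing $|(A\bigtriangleup B)\cap[1,n]|\le n(1-1/m_0)$ for \emph{all} $n\ge m_0$ from $\mathsf{d}_\star(A\bigtriangleup B)<1-1/m_0$, which is an upper-density inference. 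You should state explicitly that you are proving the upper-density version rather than silently exchanging $\mathsf{d}_\star$ for $\overline{\mathsf{d}}$; with that caveat recorded, every step of your proposal checks out.
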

It is worth to remark that, differently from Theorem \ref{thm:main}, Theorem \ref{thm:mainA} and Theorem \ref{thm:mainB} do not provide the topological complexities of the claimed sets. 

Lastly, on the opposite direction of all the above results, we conclude by showing that the family of $B_h[g]$ sets is comeager in the realm of finite subsets with given cardinality. This extends a recent result of 1 \cite{Mel25} (which coincides with the case $g=1$ and $X$ finite dimensional) and answers the open question contained therein. 
\begin{prop}\label{prop:nathanson}
    Let $X$ be a \textup{(}real or complex\textup{)} topological vector space and fix integers $n,h \ge 2$ and $g\ge 1$. Then the family of all $B_h[g]$ subsets $A\subseteq X$ with $|A|=n$ can be regarded as an open dense 
    \textup{(}hence, comeager\textup{)} 
    subset of $X^n$. 
\end{prop}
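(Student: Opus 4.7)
My plan is to identify the family of $B_h[g]$ subsets $A\subseteq X$ with $|A|=n$ with the $S_n$-invariant subset $U\subseteq X^n$ consisting of those tuples $(a_1,\ldots,a_n)$ that have pairwise distinct entries and for which $\{a_1,\ldots,a_n\}\in \mathcal{B}_{h,g}$; the content of the proposition is then that $U$ is open and dense in $X^n$.

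To describe the complement $X^n\setminus U$ concretely, let $\Sigma$ denote the (finite) set of weak compositions of $h$ into $n$ parts, i.e., $\Sigma=\{(c_1,\ldots,c_n)\in \mathbb{N}^n : c_1+\cdots+c_n=h\}$, and, for each $\mathbf{c}\in \Sigma$, consider the continuous linear map $L_{\mathbf{c}}:X^n\to X$ given by $L_{\mathbf{c}}(a_1,\ldots,a_n)=c_1a_1+\cdots+c_na_n$. Then $X^n\setminus U$ is precisely the union of the diagonals $E_{i,j}:=\{a\in X^n:a_i=a_j\}$ for $i<j$, together with the subspaces $F_{\{\mathbf{c}_0,\ldots,\mathbf{c}_g\}}:=\{a\in X^n:L_{\mathbf{c}_0}(a)=\cdots=L_{\mathbf{c}_g}(a)\}$ indexed by the unordered $(g+1)$-element subsets of $\Sigma$. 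Each of these sets is a closed linear subspace of $X^n$, and each is \emph{proper}: for any two distinct $\mathbf{c}_0,\mathbf{c}_1\in\Sigma$ the integer vector $\mathbf{c}_0-\mathbf{c}_1$ is nonzero, so the continuous linear map $L_{\mathbf{c}_0}-L_{\mathbf{c}_1}:X^n\to X$ fails to be identically zero whenever $X\ne \{0\}$, and the same observation handles the diagonals.

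The argument then closes with the standard observation that any proper linear subspace $V$ of a topological vector space $Y$ has empty interior: if some open neighborhood $W$ of $0$ were contained in $V$, then the continuity of scalar multiplication would yield, for each $y\in Y$, some $t\ne 0$ with $ty\in W\subseteq V$, forcing $y=t^{-1}(ty)\in V$ and contradicting $V\ne Y$. Since a finite union of closed sets with empty interior still has empty interior, it follows that $U$ is open and dense in $X^n$. I do not foresee any serious obstacle; the only point meriting any care is the verification that each defining linear condition cuts out a \emph{proper} (rather than full) subspace of $X^n$, which is exactly the elementary nontriviality check already sketched under the implicit assumption $X\ne \{0\}$.
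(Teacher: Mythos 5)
Your proposal is correct and follows essentially the same route as the paper: identify the family with the set of tuples in $X^n$ having distinct coordinates, and write its complement as a finite union of closed proper linear subspaces (diagonals plus the loci where $g+1$ distinct weight vectors of total weight $h$ give equal linear combinations), each of which is nowhere dense. The only cosmetic difference is that you prove the ``proper closed subspace has empty interior'' fact directly via absorbency, where the paper cites a reference.
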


Thus, 
it follows by Proposition \ref{prop:nathanson} that 
the family of all $A\subseteq X$ with cardinality $n$ which are $B_h[g]$ sets for every $h\ge 2$ and $g\ge 1$ can be regarded as a dense $G_\delta$ subset of $X^n$, hence comeager. All proofs are given in the next section.

\section{Proofs}

\begin{proof}
[Proof of Theorem \ref{thm:main}]
    Since $\mathcal{P}(\mathbb{N})$ is identified with $\{0,1\}^{\mathbb{N}}$, 
a basic open set in $\mathcal{P}(\mathbb{N})$ will be a cylinder of the type $\{A\subseteq \mathbb{N}: A \cap [0,k]=F\}$ for some integer $k \in \mathbb{N}$ and some (possibly empty) finite set $F\subseteq [0,k]$.

First, let us show that $\mathcal{B}_{h,g}$ is closed. This is obvious is $\mathcal{B}_{h,g}=\mathcal{P}(\mathbb{N})$. Otherwise, pick a set $A_0\notin \mathcal{B}_{h,g}$. Hence, there exists $n_0 \in \mathbb{N}$ such that $r_{A_0,h}(n_0)\ge g+1$. At this point, define the open set 
    $$
    \mathcal{U}:=\left\{A\subseteq \mathbb{N}: A \cap [0,n_0]=A_0 \cap [0,n_0]\right\}. 
    $$
    It would be enough to note that $\mathcal{U} \cap \mathcal{B}_{h,g}=\emptyset$: in fact, if $A \in \mathcal{U}$, then $r_{A,h}(n_0)=r_{A_0,h}(n_0)\ge g+1$, hence $A$ is not a $B_{h,g}$ set. 

    Lastly, we need to show that $\mathcal{B}_{h,g}$ has empty interior. In fact, suppose that there exists a nonempty finite set $F_0\subseteq \mathbb{N}$ such that each $A\subseteq\mathbb{N}$ is a $B_h[g]$ set whenever $F_0\subseteq A$. 
    At this point, define the finite set 
    $$
    A:=F_0\cup \{x_0,x_0+1,\ldots,h(x_0+g)\}, 
    $$
    where $x_0:=1+\max F_0$. Note that, for each $i \in \{0,1,\ldots,g\}$, we have 
    $$
    h(x_0+g)=(h-2)(x_0+g)+(x_0+g+i)+(x_0+g-i), 
    $$
    and that all the above variables belong to $A$. This provides at least $g+1$ distinct representations of the integer $h(x_0+g)$, that is, $r_{A,h}(h(x_0+g))\ge g+1$. Therefore 
    $A$ is not a $B_h[g]$ set, completing the proof. 
\end{proof}

\bigskip

\begin{proof}
    [Proof of Theorem \ref{thm:mainA}]
    Let $\mathcal{S}$ be the claimed set. We are going to use the Banach--Mazur game defined as follows, see \cite[Theorem 8.33]{K}\textup{:} 
Players I and II choose alternatively nonempty open subsets of $\{0,1\}^{\mathbb{N}}$ as a nonincreasing chain 
$
\mathcal{U}_0\supseteq \mathcal{V}_0 \supseteq \mathcal{U}_1 \supseteq \mathcal{V}_1\supseteq \cdots, 
$ 
where Player I chooses the sets $\mathcal{U}_0,\mathcal{U}_1,\ldots$; Player II is declared to be the winner of the game if 
\begin{equation}\label{eq:claimgame}
\bigcap\nolimits_{m\ge 0} \mathcal{V}_m \cap \, \mathcal{S}\neq \emptyset. 
\end{equation}
Then Player II has a winning strategy (that is, he is always able to choose suitable sets $\mathcal{V}_0, \mathcal{V}_1,\ldots$ so that \eqref{eq:claimgame} holds at the end of the game) if and only if $\mathcal{S}$ is a comeager set in the Cantor space $\{0,1\}^{\mathbb{N}}$. 

At this point, we define define the strategy of player II recursively as it follows. Suppose that the nonempty open sets $\mathcal{U}_0\supseteq \mathcal{V}_0 \supseteq \cdots \supseteq \mathcal{U}_{m}$ have been already chosen, for some $m \in \mathbb{N}$. Then there exists a nonempty finite set $F_m \subseteq \mathbb{N}$ and an integer $k_m \ge \max F_m$ such that 
$$
\forall A\subseteq \mathbb{N}, \quad 
A\cap [0,k_m]=F_m \implies A \in \mathcal{U}_m.
$$
Set $x_m:=h(1+k_m)+1$ and let $t_m$ be an integer such that $t_m\ge x_m+1$ and 
$$
r_{\{x_m, x_m+1, \ldots,t_m\},\,h}(t_m) \ge mf(t_m).
$$
Note that this integer $t_m$ actually exists since it follows by \eqref{eq:erdos} that
$$
r_{\{x_m, x_m+1, \ldots,n\},\,h}(n)=r_{\{0,1,\ldots,n-hx_m\},h}(n-hx_m)
\sim \frac{n^{h-1}}{h!(h-1)!}
$$
as $n\to \infty$. Lastly, define the open set 
$$
\mathcal{V}_m:=\left\{A\subseteq \mathbb{N}: A \cap [0,t_m]=F_m \cup \{x_m,x_m+1,\ldots,t_m\}\right\}. 
$$

To complete the proof, it is enough to show that this is indeed a winning for Player II. In fact, pick $A \in \bigcap_m \mathcal{V}_m$ (which is possible). It follows that, for each $m \in \mathbb{N}$, the equation $h(1+k_m)=a_1+\cdots+a_h$ with $a_1,\ldots,a_h \in A$ does not have solutions since $x_m> \max\{a_1,\ldots,a_h\} \ge h(1+k_m)/h>k_m$ and, by construction, $(k_m,x_m) \cap A=\emptyset$. It follows that 
$$
r_{A,h}(h(1+k_m))=0 
$$
for all $m \in \mathbb{N}$, hence $\liminf_n r_{A,h}(n)=0$. 
In addition, by construction we have
$$
r_{A,h}(t_m)\ge r_{A\cap [x_m,t_m]} \ge mf(t_m)
$$
for all $m\in \mathbb{N}$, so that $\limsup_n r_{A,h}(n)/f(n)=+\infty$. Therefore $A \in \mathcal{S}$. 
\end{proof}

\bigskip

\begin{proof}
[Proof of Theorem \ref{thm:mainB}]
    Fix integers $h\ge 2$ and $g\ge 1$, and let $\mathcal{M}$ be the family of subsets $A\subseteq \mathbb{N}$ such that, for all sets $B\subseteq \mathbb{N}$ such that $\mathsf{d}_\star(A\bigtriangleup B)<1$, $B$ is not a $B_h[g]$ set.   
    It is enough to show that $\mathcal{M}$ is comeager. For, we are going to use the same strategy used in the proof of Theorem \ref{thm:mainA}. 
    
    The strategy of Player II is defined as follows: suppose that the open set $\mathcal{U}_m$ has been chosen, and define $F_m$ and $k_m$ as in the proof of Theorem \ref{thm:mainA}. 
    Pick an integer $y_m>mk_m$ and     
    define 
    $$
    \mathcal{V}_m:=\left\{A\subseteq \mathbb{N}: A \cap [0,y_m]=F_m\cup \{k_m+1,k_m+2,\ldots,y_m\}\right\}.
    $$

    To conclude the proof, it is enough to show this is indeed a winning strategy for Player II. To this aim, pick $A \in \bigcap_m \mathcal{V}_m$ and fix $B\subseteq \mathbb{N}$ such that 
    $$
    \alpha:=\mathsf{d}_\star(A\bigtriangleup B)\in [0,1).
    $$
    We claim that $B$ is not a $B_h[g]$ set. For, suppose by contradiction that $r_{B,h}(n)\le g$ for all $n \in \mathbb{N}$. Pick a positive integer $m_0$ such that $\alpha<1-1/m_0$ and $|(A\bigtriangleup B) \cap [1,n]|\le n(1-1/m_0)$ for all $n\ge m_0$. Observe also that $y_m> mk_m$ implies 
    $$
    \frac{|A\cap [1,y_m]|}{y_m}\ge 
    \frac{|A\cap (k_m,y_m]|}{y_m} \ge 1-\frac{1}{m+1} 
    $$
    for all $m\in \mathbb{N}$. Setting $c:=1/m_0-1/(m_0+1)>0$, it follows that 
    \begin{displaymath}
        \begin{split}
    \frac{|B\cap (k_m,y_m]|}{y_m}&\ge 
    \frac{|A\cap (k_m,y_m]|-|(A\bigtriangleup B) \cap [1,n]|}{y_m} \\
    &\ge 1-\frac{1}{m+1}-\left(1-\frac{1}{m_0}\right)\ge c
    \end{split}
    \end{displaymath}
    for all $m\ge m_0$. Since that there are at least $cy_m$ integers in $B \cap (k_m,y_m]$ and all sums of $h$ distinct terms from $B \cap [0,y_m]$ are upper bounded by $hy_m$, we obtain 
    $$
    \binom{\lceil cy_m\rceil }{h} \le \sum_{n=1}^{hy_m}r_{B,h}(n) \le hg y_m 
    $$ 
    for all $m\ge m_0$. However, the latter inequality fails if $m$ is sufficiently large since the left hand side is $\gg y_m^h$ (and $h\ge 2$). This concludes the proof. 
\end{proof}

\bigskip

\begin{proof}
[Proof of Proposition \ref{prop:nathanson}]
    Identify each set $A=\{a_1,\ldots,a_n\}\subseteq X$ with the vector $(a_1,\ldots,a_n) \in X^n$. Hence, we need to show that the set $\mathcal{V}$ of vectors $a \in X^n$ with pairwise distinct coordinates such that $\{a_1,\ldots,a_n\}$ is a $B_h[g]$ set is open dense in $X^n$. 
    For, note that $a\notin \mathcal{V}$ if and only if either $a_i=a_j$ for some $i\neq j$ 
    or there exist pairwise distinct vectors $\alpha_1,\ldots,\alpha_{g+1} \in \mathbb{N}^n$ such that 
    $$
    \sum_{i=1}^n\alpha_{1,i}=\cdots=\sum_{i=1}^n\alpha_{g+1,i}=h
    $$
    and 
    $$
    \sum_{i=1}^n\alpha_{1,i}a_i=\cdots=\sum_{i=1}^n\alpha_{g+1,i}a_i.
    $$
    In both cases, $a$ belongs to a finite intersection of hyperplanes  $\mathcal{H}_\gamma\subseteq X^n$ of the type $\sum_i\gamma_ix_i=0$ for some $\gamma \in \Gamma:=(\mathbb{Z} \cap [-h,h])^n\setminus \{0\}$. 
    Notice that $\Gamma$ is finite, and that each $\mathcal{H}_\gamma$ is the kernel of the nonzero continuous linear operator $T_\gamma: X^n\to X$ defined by $T_\gamma(x):=\sum_i\gamma_ix_i$, hence $\mathcal{H}_\gamma$ is closed and with empty interior, see e.g. \cite[Propositions 1.9.6 and 1.9.9]{MR3616849}. It follows that 
    $$
    X^n\setminus \mathcal{V}
    $$
    can be rewritten as a finite union of finite intersections of hyperplanes $\mathcal{H}_\gamma$. Thus $X^n\setminus \mathcal{V}$ is closed with empty interior, which is equivalent to our claim. 
\end{proof}

\bibliographystyle{amsplain}
\bibliography{idealezzz}

\end{document}